\newtheorem{theorem}{Theorem}
\theoremstyle{plain}
\newtheorem{corollary}{Corollary}
\newtheorem{definition}{Definition}
\newtheorem{lemma}{Lemma}
\numberwithin{equation}{section}
\begin{document}
\title[Levelt Theorem]{On the Levelt's Theorem}

\begin{abstract}
Let $(E)$ be a homogeneous linear differential equation Fuchsian of order $n$
over $\mathbb{P}^{1}\left(  \mathbb{C}\right)  $. The idea of Riemann (1857)
was to obtain the properties of solutions of ($E$) by studying the local
system. Thus, he obtained some properties of Gauss hypergeometric functions by
studying the associated rank $2$ local system over $\mathbb{P}^{1}\left(
\mathbb{C}\right)  \backslash\left\{  3\ points\right\}  $. For example, he
obtained the Kummer transformations of the hypergeometric functions without
any calculation. The success of the Riemann's methods is due to the fact that
the irreducible rank $2$ local system over $\mathbb{P}^{1}\left(
\mathbb{C}\right)  \backslash\left\{  3\ points\right\}  $ is linearly "rigid"
in the sense of Katz \cite{Katz}. This result constitute one of the best
studied example of linear rigid system, it was proved by the Levelt's theorem
\cite{B} Theorem 1.2.3. In this work we propose a partial generalization of
the Levelt's theorem.

\end{abstract}
\author{Lotfi Saidane}
\address{Lotfi Saidane, D\'{e}partement de Math\'{e}matiques, Facult\'{e} des sciences
de Tunis, Campus Universitaire, 1060 Tunis, Tunisie.}
\email{lotfi.saidane@fst.rnu.tn}
\date{October 2009}
\subjclass{12H05}
\keywords{Linearly rigid system, hypergeom\'{e}tric operator, monodromy.}
\maketitle

\section{Introduction}

\noindent Let $n\in\mathbb{N}_{\geq1}$ and $a_{1},$ .., $a_{n}\in
\mathbb{C(}z).$ We consider the homogeneous linear differential equation of
order $n$ on $\mathbb{P}^{1}\mathbb{(C})$ :%
\begin{equation}
y^{(n)}+a_{1}y^{(n-1)}+....+a_{n-1}y^{\prime}+a_{n}y=0.\tag{E}%
\end{equation}
We denote by $S=\left\{  \varpi_{1},..,\varpi_{s}\right\}  $ the non empty
set, in $\mathbb{P}^{1}\mathbb{(C})$, of its singularities, we required that
will be all regular. We fix a base point $z_{0}\in\mathbb{P}^{1}%
\mathbb{(C})\backslash S$ and denote by $G$ the fundamental group $\pi
_{1}(\mathbb{P}^{1}\mathbb{(C})\backslash S,\ z_{0})$. Then $G$ is a free
group generated by the classes of homotopy of loops $\gamma_{i}$ starting at
$z_{0}$ and making a turn in the direct sense of $\varpi_{i},$ in a
neighborhood not containing $\varpi_{j},$ $j\neq i,$ then returning to $z_{0}$
such that :
\[%
%TCIMACRO{\tprod \nolimits_{i\in\left\{  1,..,s\right\}  }}%
%BeginExpansion
{\textstyle\prod\nolimits_{i\in\left\{  1,..,s\right\}  }}
%EndExpansion
\gamma_{i}=1.
\]
Because $z_{0}$ is a regular point for the equation $(E)$, the Cauchy
conditions are satisfied, therefore there are $n$ solutions (local) of $(E)$
holomorphic near $z_{0}$, linearly independent over $\mathbb{C}$. Let $V$ be a
$\mathbb{C-}$vector space spanned by these solutions. The group representation%
\[
M_{(E)}:\pi_{1}(\mathbb{P}^{1}\mathbb{(C})\backslash S,\ z_{0})\rightarrow
GL(V),
\]
is called the monodromy representation of $(E).$ For $i\in\left\{
1,..,s\right\}  $, we put $M_{i}=M_{(E)}(\gamma_{i}).$ Then, $M_{i}\in
GL_{n}(\mathbb{C})$ and
\[%
%TCIMACRO{\tprod \nolimits_{i\in\left\{  1,..,s\right\}  }}%
%BeginExpansion
{\textstyle\prod\nolimits_{i\in\left\{  1,..,s\right\}  }}
%EndExpansion
M_{i}=I_{n}.
\]
The matrices $M_{i}$ are called (local) monodromy matrix, they constitute a
local (complex) system of order $n$ on $\mathbb{P}^{1}\mathbb{(C})\backslash
S$. The group generated by the matrices $M_{i}$ is called the "monodromy
group" of $(E)$, related to the basis of local solutions at $z_{0}$.

\noindent\textbf{Question}: If we change the basis of local solutions, is the
new monodromy group isomorphic to the former?

\noindent In other words: The local system defined by the $M_{i}$ is it
"linearly rigid" (in the sens below)?

\noindent Let $r\in\mathbb{N}_{\geq2}$ and $g_{1}$, $g_{2}$, $....$, $g_{r}$
elements of $GL_{n}(\mathbb{C})$ satisfying%
\[
g_{1}.g_{2}....g_{r}=Id_{n}%
\]
We say that the $r-$tuple $\left\{  g_{1}\text{, }g_{2}\text{, }...\text{,
}g_{r}\right\}  $ is linearly rigid if for any conjugate $\widetilde{g_{1}}$,
$\widetilde{g_{2}}$, $...$, $\widetilde{g_{r}}$ of $g_{1}$, $g_{2}$, $....$,
$g_{r}$ in $GL_{n}(\mathbb{C})$ satisfying:%
\[
\breve{g}_{1}\breve{g}_{2}...\breve{g}_{r}=Id_{n},
\]
there is $u$ in $GL_{n}(\mathbb{C}$ $)$ such that $\breve{g}_{i}=ug_{i}u^{-1}
$ for $i=1,2,...r$. For example, the couple $(g$, $g^{-1})$, $g\in
GL_{n}(\mathbb{C})$ is linearly rigid.

\noindent The group $<g_{1},..,g_{r}>$ is said irreducible if and only if it
acts irreducibly on $\mathbb{C}^{n}.$ Katz \cite{Katz} theorem 1.1.2,
characterized the Jordan normal forms of irreducible linearly rigid local
systems. The Levelt theorem, \cite{B} Theorem 1.2.3 shows that the local
system associated to a hypergeometric equation is irreducible linearly rigid.

\section{Pseudo-reflection}

\begin{definition}
We say that $h\in GL(n,\mathbb{C)}$ is a pseudo-reflection if the rank of
$(h-Id_{n})$ is $1.$
\end{definition}

\begin{lemma}
Let $n,p\in\mathbb{N}_{\geq2}$ and $A_{1},$ $A_{2},$ ..., $A_{p}\in$
$GL_{n}(\mathbb{C)}$ such that, for all $i,$ $j\in\left\{  1,2,...,p\right\}
$, $i<j$, the operator $A_{i}A_{j}^{-1}$ is a pseudo-reflection. Then, up to
conjugation, $A_{1},$ $A_{2},$ ..., $A_{p}$ have the same $(n-1)$ first rows
or columns.\label{ligne}
\end{lemma}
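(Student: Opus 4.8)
I would first convert the pseudo-reflection hypothesis into a statement about ranks of differences. Since each $A_j$ lies in $GL_n(\mathbb C)$, the identity $A_i-A_j=(A_iA_j^{-1}-I_n)A_j$ shows that $A_i-A_j$ and $A_iA_j^{-1}-I_n$ have the same rank; so the hypothesis says exactly that $\operatorname{rank}(A_i-A_j)=1$ for $i<j$, and because $A_j-A_i=-(A_i-A_j)$ the same holds for all $i\neq j$. Choosing the index $1$ as a base point, for $2\le i\le p$ I would write $A_i-A_1=v_i\ell_i$ with $v_i\in\mathbb C^n$ a nonzero column vector and $\ell_i$ a nonzero row vector (a rank-$1$ factorization), and set $v_1=\ell_1=0$, so that $A_i=A_1+v_i\ell_i$ for every $i\in\{1,\dots,p\}$.

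Next, for $2\le i<j\le p$ I would examine $A_i-A_j=v_i\ell_i-v_j\ell_j$, viewed as the linear endomorphism sending $x$ to $(\ell_i x)\,v_i-(\ell_j x)\,v_j$, where $\ell_i x\in\mathbb C$ is the product of the row $\ell_i$ with the column $x$. If $v_i,v_j$ were linearly independent and, simultaneously, $\ell_i,\ell_j$ were linearly independent, this map would vanish precisely on the common kernel of $\ell_i$ and $\ell_j$, which has codimension $2$, so $A_i-A_j$ would have rank $2$, a contradiction. Hence for every such pair, $v_i\parallel v_j$ or $\ell_i\parallel\ell_j$. Since parallelism of nonzero vectors is an equivalence relation, this produces two equivalence relations $\sim_v$ and $\sim_w$ on $\{2,\dots,p\}$ such that $i\sim_v j$ or $i\sim_w j$ for all $i\neq j$. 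I then claim one of them has a single class: if $\sim_v$ had at least two classes, then any pair $i,j$ either lies in two different $\sim_v$-classes, forcing $i\sim_w j$, or lies in one class $C$, in which case, picking $k\notin C$ (possible since $\sim_v$ is not trivial), we get $i\not\sim_v k$ and $j\not\sim_v k$, hence $i\sim_w k\sim_w j$ and again $i\sim_w j$; thus $\sim_w$ would be the complete relation, i.e. all the $\ell_i$ would be multiples of one nonzero row vector. So either all the $v_i$ ($2\le i\le p$) are multiples of a fixed nonzero $v\in\mathbb C^n$, or all the $\ell_i$ are multiples of a fixed nonzero row vector $\ell$; absorbing scalars we may write, respectively, $A_i=A_1+v\,m_i$ with $m_i$ a row vector and $m_1=0$, or $A_i=A_1+w_i\,\ell$ with $w_i$ a column vector and $w_1=0$.

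It remains to conjugate away the perturbation direction. In the first case choose $g\in GL_n(\mathbb C)$ with $gv=e_n$ (the $n$-th standard basis vector); then $gA_ig^{-1}=gA_1g^{-1}+e_n\,(m_ig^{-1})$ differs from $gA_1g^{-1}$ only in its last row, so $gA_1g^{-1},\dots,gA_pg^{-1}$ all share their first $n-1$ rows. In the second case choose $g\in GL_n(\mathbb C)$ whose last row equals $\ell$; then $\ell g^{-1}$ is the last row of $I_n$, namely $(0,\dots,0,1)$, so $gA_ig^{-1}=gA_1g^{-1}+(gw_i)\,(\ell g^{-1})$ differs from $gA_1g^{-1}$ only in its last column, and the conjugates share their first $n-1$ columns. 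The case $p=2$ is the degenerate instance of the above (one rank-$1$ perturbation), treated identically.

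The step I expect to be most delicate is the combinatorial claim that one of the parallelism relations $\sim_v,\sim_w$ is trivial: the argument is short but one must get the quantifiers right and not overlook the sub-case of a singleton $\sim_v$-class. Closely tied to it is the verification that simultaneous linear independence of the $v$'s and the $\ell$'s really forces rank $2$, which is what makes the dichotomy ``$v_i\parallel v_j$ or $\ell_i\parallel\ell_j$'' airtight; granting these, the factorization and the conjugation are routine bookkeeping.
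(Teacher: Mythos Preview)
Your proof is correct and handles all $p\ge 2$ uniformly, whereas the paper argues only the case $p=3$ in detail and asserts that the general case is similar. The two approaches are genuinely different. The paper works with the kernels $W_1=\ker(A_1-A_2)$ and $W_2=\ker(A_2-A_3)$, splits into cases according to whether $W_1=W_2$ and whether $A_1A_2^{-1}$ is a true reflection or unipotent, and then performs explicit coordinate computations in a carefully chosen basis; extending this to larger $p$ would require tracking more hyperplanes and more case distinctions. Your route instead fixes $A_1$ as a base point, writes each $A_i-A_1=v_i\ell_i$ as a rank-one outer product, reduces the pairwise rank-one condition on $A_i-A_j$ to the dichotomy $v_i\parallel v_j$ or $\ell_i\parallel\ell_j$, and then uses the short combinatorial lemma that two equivalence relations whose union is complete must include at least one complete relation. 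This buys you a clean, basis-free argument that scales to arbitrary $p$ without additional work, and the final conjugation step is transparent. The paper's approach, by contrast, is more hands-on and makes the geometric picture (intersecting hyperplanes, eigenvectors of the pseudo-reflection) more visible in low cases, at the cost of a proliferation of sub-cases.
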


\begin{proof}
We will furnish a proof for $p=3$, the general case being similar. Assume that
$n\geq3.$\newline We set $W_{1}=\ker(A_{1}-A_{2})$ and $W_{2}=\ker(A_{2}%
-A_{3}).$ Because $A_{1}A_{2}^{-1}$ and $A_{2}A_{3}^{-1}$ are
pseudo-reflections, we deduce that $W_{1}$ and $W_{2}$ are $n-1$ dimensional
subspaces of $\mathbb{C}^{n}$. If $W_{1}=W_{2},$ we choose a basis of $W_{1}$
and we complete it by one vector to obtain a basis of the total space. With
respect to this basis the matrices $A_{1}$, $A_{2}$ and $A_{3}$ have the same
$(n-1)$ first columns.\newline Assume that $W_{1}\neq W_{2}.$ Because
$n\geq3,$ the vector space $W_{1}\cap W_{2}$ has dimension $n-2.$ Let
$\left\{  e_{1},...,e_{n-2}\right\}  $ a basis of the space $W_{1}\cap W_{2}%
$.\newline* If $A_{1}A_{2}^{-1}$ is a reflection, there is $e_{n}$ such that
$(A_{1}-A_{2})(e_{n})=\nu e_{n},$ with $\nu\neq0.$ We choose $e_{n-1}$ in
$W_{1}$ so that $\left\{  e_{1},...,e_{n-1}\right\}  $ is a basis of $W_{1}.$
The system $\underline{e}=\left\{  e_{1},...,e_{n}\right\}  $ is then a basis
of $\mathbb{C}^{n}.$ For $i\in\left\{  1,2,3\right\}  ,$ $j\in\left\{
1,2,..,n\right\}  ,$ We denote by $A_{i,j}$, the $j-th$ column of the matrix
$A_{i}$, relatively to the basis $\underline{e},$ Because $rank(A_{1}%
-A_{3})=rank(A_{2}-A_{3})=1$ and $W_{1}\neq W_{2},$ there exist $\lambda$ and
$\beta$ non-zero in $\mathbb{C}$ such that
\[
A_{1,n-1}-A_{3,n-1}=\lambda(A_{1,n}-A_{3,n})
\]
and%
\[
A_{2,n-1}-A_{3,n-1}=\beta(A_{2,n}-A_{3,n}).
\]
We have, by hypothesis $A_{1,n-1}=A_{2,n-1}$ and $A_{1}e_{n}=A_{2}e_{n}+\nu
e_{n}.$ Substituting these relations in the previous equalities, we obtain, by
abuse of writing,%
\[
\lambda(A_{2,n}+\nu e_{n}-A_{3,n})=\beta(A_{2,n}-A_{3,n}).
\]
Therefore, there is at least $\alpha\in\mathbb{C}$, $\alpha\neq0$ such that
\[
A_{2,n}-A_{3,n}=\alpha e_{n},
\]
then%
\[
A_{1,n-1}-A_{3,n-1}=\lambda\alpha e_{n},
\]
and consequently, the $\left(  n-1\right)  $ first row of the matrices
$A_{1},$ $A_{2}$ and $A_{3}$ are identical.\newline* If $A_{1}A_{2}^{-1}$ is
idempotent ($1$ is the unique eigenvalue), then the image of $A_{1}-A_{2}$ is
contained in its kernel $W_{1}$. Let $w$ be a generator of $\operatorname{Im}%
\left(  A_{1}-A_{2}\right)  ,$ then, there exist a vector $e_{n}$ in
$\mathbb{C}^{n}$ such that $\left(  A_{1}-A_{2}\right)  e_{n}=w.$ if $w\in
W_{1}\cap W_{2},$ we set $e_{1}=w$ so that $\left\{  e_{1},...,e_{n-2}%
\right\}  $ is a basis of the space $W_{1}\cap W_{2}.$ We choose $e_{n-1}$ in
$W_{1}$ such that $\left\{  e_{1},...,e_{n-1}\right\}  $ is a basis of
$W_{1}.$ Then, the system $\underline{e}=\left\{  e_{1},...,e_{n}\right\}  $
is a basis of $\mathbb{C}^{n}.$ If $w\notin W_{1}\cap W_{2},$ we ste
$e_{n-1}=w.$ Thus, there exist a unique index $m\in\left\{  1,n-1\right\}  ,$
such that $w=e_{m}.$ Under these conditions, the $(n-1)$ first columns of
$A_{1}-A_{2}$ are zero and the last column is equal to $e_{m}.$ The $(n-2$
first columns of $A_{1}-A_{3}$ and $A_{2}-A_{3}$ are zero. Using the fact that
$A_{1}-A_{3}$ and $A_{2}-A_{3}$ are of rank $1$, we obtain that all components
of their two last columns, except the $m-th$ row, are zero. Therefore, exept
the $m-$th row, all rows of $A_{1}$, $A_{2}$ and $A_{3}$ are
identical.\newline If $n=2$ and $W_{1}\neq W_{2}.$ We repeated the same
argument by replacing $W_{1}\cap W_{2}$ by $\left\{  0\right\}  .$
\end{proof}

\noindent The two following results modify and generalize, in part, the
theorem 1.2.1 of \cite{B}.

\begin{theorem}
Let $n,p\in\mathbb{N}_{\geq2}$ and $A_{1},$ $A_{2},$ ..., $A_{p}\in$
$M_{n}(\mathbb{C)}$ having the same $(n-1)$ first rows or columns and one
eigenvalue in commun. Then these matrices stabilize at least a line or a
hyperplane of $\mathbb{C}^{n}.\label{stabilise}$
\end{theorem}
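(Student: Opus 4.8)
The plan is to first dispose of the ``rows'' alternative by transposition, and then, in the ``columns'' case, to exhibit a concrete invariant subspace built from the common eigenvalue.

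\emph{Reduction to the column case.} Transposition preserves the characteristic polynomial, so if $A_{1},\dots,A_{p}$ share their first $n-1$ rows, the transposes $A_{1}^{t},\dots,A_{p}^{t}$ share their first $n-1$ columns and still have the eigenvalue $\lambda$ in common. Moreover, for the standard bilinear form $\langle v,w\rangle=v^{t}w$ on $\mathbb{C}^{n}$, which is non-degenerate, the inclusion $A_{i}^{t}Z\subseteq Z$ implies $A_{i}Z^{\perp}\subseteq Z^{\perp}$ (from $\langle z,A_{i}v\rangle=\langle A_{i}^{t}z,v\rangle$), and $\dim Z^{\perp}=n-\dim Z$. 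Hence a common stable line for the $A_{i}^{t}$ gives a common stable hyperplane for the $A_{i}$, and a common stable hyperplane gives a common stable line. So it suffices to treat the case where $A_{1},\dots,A_{p}$ share their first $n-1$ columns $c_{1},\dots,c_{n-1}$, with common eigenvalue $\lambda\in\mathbb{C}$.

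\emph{The candidate subspace and the dichotomy.} Set $B_{i}=A_{i}-\lambda I_{n}$. Its first $n-1$ columns are the common vectors $v_{j}=c_{j}-\lambda e_{j}$ for $1\le j\le n-1$, and only its last column $d_{i}=B_{i}e_{n}$ depends on $i$. Let $W$ be the span of $v_{1},\dots,v_{n-1}$. If $v_{1},\dots,v_{n-1}$ are linearly dependent, choose scalars $a_{1},\dots,a_{n-1}$, not all zero, with $\sum_{j}a_{j}v_{j}=0$; then $x=\sum_{j=1}^{n-1}a_{j}e_{j}$ is nonzero and $B_{i}x=\sum_{j}a_{j}v_{j}=0$, i.e. $A_{i}x=\lambda x$ for every $i$, so $\mathbb{C}x$ is a common stable line. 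If instead $v_{1},\dots,v_{n-1}$ are linearly independent, so $\dim W=n-1$, then for each $i$ the equality $\det B_{i}=\det(A_{i}-\lambda I_{n})=0$ (valid because $\lambda$ is an eigenvalue of $A_{i}$) shows the $n$ columns $v_{1},\dots,v_{n-1},d_{i}$ of $B_{i}$ to be dependent, which, given the independence of the first $n-1$, forces $d_{i}\in W$. Hence $\operatorname{Im}B_{i}\subseteq W$ for every $i$, and for $w\in W$ we get $A_{i}w=\lambda w+B_{i}w\in W$; thus $W$ is a common stable hyperplane. When $n=2$ the ``first $n-1$ columns'' is the single vector $v_{1}$, and the same dichotomy ($v_{1}=0$ versus $v_{1}\neq 0$) applies unchanged.

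\emph{Where the difficulty lies.} The computations are light; the crux is choosing the right candidate and isolating the role of the shared eigenvalue. The value $\lambda$ is exactly what pins the otherwise unconstrained last column $d_{i}$ of $A_{i}-\lambda I_{n}$ into the fixed hyperplane $W$ spanned by the common columns, after which invariance of $W$ is automatic; in the degenerate sub-case the common columns already manufacture a common $\lambda$-eigenvector. The one thing to keep straight is the line/hyperplane bookkeeping under the transposition reduction, together with checking that the argument degrades gracefully at $n=2$ (and imposes no invertibility on the $A_{i}$, consistent with the hypothesis $A_{i}\in M_{n}(\mathbb{C})$).
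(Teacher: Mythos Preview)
Your proof is correct and follows essentially the same strategy as the paper: reduce one alternative to the other by transposition, shift by the common eigenvalue $\lambda$ to $B_i=A_i-\lambda I_n$, and split into two cases according to whether the common first $n-1$ rows/columns of the $B_i$ are linearly independent or not. The only cosmetic difference is that the paper normalizes to the \emph{row} case and, in the independent sub-case, extracts a common $\lambda$-eigenvector (a line) via orthogonality to the shared rows, whereas you normalize to the \emph{column} case and, in the independent sub-case, exhibit $W=\operatorname{span}(v_1,\dots,v_{n-1})=\operatorname{Im}B_i$ directly as the common invariant hyperplane; your formulation avoids the paper's second pass through transposes in the dependent sub-case.
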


\begin{proof}
We note that a system of matrices stabilizing the same hyperplane if and only
if their transpose matrices, as endomorphism of the dual space, stabilize the
same line, which mean they have an eigenvector in common. Without loss of
generality, we may assume that $(n-1)$ first rows of the matrices $A_{i}$ are
identical. If $\lambda$ denotes the common eigenvalue, then the matrices
$A_{1}-\lambda I_{n}$, $A_{2}-\lambda I_{n},$ ..., $A_{p}-\lambda I_{n}$ has
the same $(n-1)$ first rows and have rank less than equal $n-1$ :\newline- If
the $(n-1)$ first rows of these matrices are linearly independent, then the
last row of each of these matrices is a linear combination of the previous.
Let $v$ be a nonzero vector orthogonal to the $(n-1)$ first rows. Then $v$ is
orthogonal to the last row of each of these matrices. Thus, $v$ is a common
eigenvector with eigenvalue $\lambda,$ of the matrix $A_{1},$ .., $A_{p}%
.$\newline- If the $(n-1)$ first rows of these matrices are linearly
dependent, then the $(n-1)$ first columns of the transpose matrices are
linearly dependent. Let $c_{1},$ $c_{2},$ ..., $c_{n-1}$ be the coefficients
of a non trivial linear dependence relation. Thereby, $v=(c_{1},$ $c_{2},$
..., $c_{n-1},$ $0) $ is a common eigenvector of all matrices $A_{1}^{T},$ ..,
$A_{p}^{T}$ with eigenvalue $\lambda$. Therefore, the matrices $A_{1},$ ..,
$A_{p}$ stabilize, simultaneously, a hyperplane.\newline If the matrix
$A_{1},$ .., $A_{p}$ have $(n-1)$ common columns , then their transpose have
$(n-1)$ ommon rows, thus the above reasoning leads to the conclusion.
\end{proof}

\begin{theorem}
Let $n,p\in\mathbb{N}_{\geq2}$, $A_{1},$ $A_{2},$ ..., $A_{p}\in$
$M_{n}(\mathbb{C)}$ having the same $(n-1)$ first rows or columns and
stabilize a same non trivial subspace of $\mathbb{C}^{n}.$ Then, $\cap
_{i=1}^{p}specA_{i}\neq\emptyset.\label{spec}$
\end{theorem}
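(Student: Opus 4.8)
The plan is to exploit the very rigid structure the hypotheses impose. First I would reduce to the row case: if the $A_i$ share $(n-1)$ columns, pass to transposes, which then share $(n-1)$ rows and stabilize the annihilator of the original invariant subspace; since $\mathrm{spec}\,A_i = \mathrm{spec}\,A_i^T$, it suffices to treat matrices with $(n-1)$ common first rows. So assume $A_1,\dots,A_p$ agree on rows $1,\dots,n-1$, and let $U$ be a common nontrivial invariant subspace, $0 \neq U \neq \mathbb{C}^n$.

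Next I would split on whether $U$ is contained in the common hyperplane $H = \{x_n = 0\}$ (equivalently, whether the last coordinate functional vanishes on $U$), or not. If $U \subseteq H$, then for any $u \in U$ only the first $n-1$ rows of $A_i$ are used to compute $A_i u$, and those rows are the same for every $i$; hence $A_i u = A_j u$ for all $i,j$, so all the $A_i$ agree on $U$. The restriction $A_1|_U$ has an eigenvalue $\mu$ (as $U \neq 0$), and any eigenvector in $U$ is a common eigenvector of all the $A_i$, giving $\mu \in \bigcap_i \mathrm{spec}\,A_i$. If instead $U \not\subseteq H$, I would look at the quotient action on $\mathbb{C}^n / U$. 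The common first $n-1$ rows mean that $A_i - A_j$ has all rows zero except possibly the last, i.e. $A_i - A_j = e_n \ell_{ij}^T$ for some covector $\ell_{ij}$; equivalently the image of $A_i - A_j$ lies in the line $\mathbb{C} e_n$. Since $U \not\subseteq H$, $e_n \in U + H$; more usefully, pick $u_0 \in U$ with nonzero last coordinate, so $\mathbb{C} e_n \subseteq U + \mathbb{C}(A_i u_0 - A_j u_0) \subseteq U$ once we know $A_i u_0 - A_j u_0 \in \mathbb{C}e_n$ and — here is the point — this difference also lies in $U$ because $U$ is invariant under each $A_i$. Thus $e_n \in U$, forcing $H \subseteq U$ (as $U$ has codimension $\leq 1$ and meets every coordinate direction appropriately), hence $U = \mathbb{C}^n$, contradicting $U \neq \mathbb{C}^n$ unless $n = 1$; so in fact this case cannot occur, and we are always in the first case.

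Let me reorganize the second case more carefully, since that is where the main obstacle lies. The clean statement is: if $U$ is $A_i$-invariant for all $i$ and $U \not\subseteq H$, then $U = \mathbb{C}^n$. Indeed choose $u_0 \in U \setminus H$. Then $A_1 u_0 \in U$ and $A_2 u_0 \in U$, so $(A_1 - A_2)u_0 \in U$; but $(A_1 - A_2)u_0 \in \mathrm{Im}(A_1 - A_2) \subseteq \mathbb{C} e_n$. If this difference is nonzero, $e_n \in U$ and then from $u_0 - (\text{multiple of } e_n) \in U \cap H$ plus invariance we build up all of $H$, hence $U = \mathbb{C}^n$. The genuinely delicate point is when $A_1 u_0 = A_2 u_0$ for every choice, i.e. when all $A_i$ happen to agree on the whole line through $u_0$ — one must then argue using a second vector or observe directly that agreement of the first $n-1$ rows already forces $A_i u - A_j u \in \mathbb{C}e_n$ for all $u$, so invariance of $U$ gives $\mathbb{C}e_n \cap U \supseteq (A_i - A_j)U$, and iterating with the fact that $\mathrm{Im}(A_i - A_j) = \mathbb{C}e_n$ for \emph{some} pair $i,j$ (otherwise all $A_i$ are literally equal, and then $\mathrm{spec}\,A_i$ is a single set and the conclusion is trivial) yields $e_n \in U$ again. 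So the main obstacle is the bookkeeping in this degenerate sub-case; once it is dispatched, every nontrivial common invariant subspace lies inside $H$, all the $A_i$ restrict identically to it, and a shared eigenvalue of that common restriction furnishes the required element of $\bigcap_{i=1}^p \mathrm{spec}\,A_i$.
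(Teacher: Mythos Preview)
Your Case~1 ($U\subseteq H$) is fine, though the justification should be that $A_iu\in U\subseteq H$ forces the last coordinate of $A_iu$ to vanish while the first $n-1$ coordinates are determined by the common rows; hence $A_iu=A_ju$. The real problem is Case~2. Your reorganized claim, that a nontrivial common invariant subspace $U$ with $U\not\subseteq H$ must equal $\mathbb{C}^n$, is false. Take $n=2$, $A_1=\begin{pmatrix}1&0\\0&1\end{pmatrix}$, $A_2=\begin{pmatrix}1&0\\1&2\end{pmatrix}$: they share the first row, $U=\mathbb{C}e_2$ is invariant under both, $U\not\subseteq H$, yet $U\neq\mathbb{C}^2$. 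So the step ``$e_n\in U$, then from $u_0-(\text{multiple of }e_n)\in U\cap H$ plus invariance we build up all of $H$'' does not work; nothing forces $U\cap H$ to generate $H$ under the $A_i$. There is also a smaller gap just before: from $\mathrm{Im}(A_i-A_j)=\mathbb{C}e_n$ for some pair you cannot conclude $e_n\in U$ without knowing $U\not\subseteq\ker(A_i-A_j)$, and that can fail too (in which case all $A_i$ coincide on $U$ and you are done directly, but you never say this).

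The fix is exactly the quotient idea you mentioned and then dropped. If $U\not\subseteq H$ and $e_n\notin U$, then $(A_i-A_j)(U)\subseteq U\cap\mathbb{C}e_n=0$, so all $A_i$ agree on $U$ and you get a common eigenvalue from the restriction. If $e_n\in U$, then $\mathrm{Im}(A_i-A_j)\subseteq\mathbb{C}e_n\subseteq U$ for every pair, hence the induced maps on $\mathbb{C}^n/U$ all coincide; since $U\neq\mathbb{C}^n$ this quotient is nonzero and its single well-defined operator contributes a common eigenvalue to every $\mathrm{spec}\,A_i$. This is essentially what the paper does in the column picture: when the invariant subspace is not contained in the span $E$ of the first $n-1$ basis vectors, it chooses a basis adapted to $W$ and to a complement of $W$ inside $E$, obtaining a block lower-triangular form whose upper-left block is common to all $A_i$ (that block is precisely the quotient action), and reads off the shared eigenvalues from there.
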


\begin{proof}
We may suppose that, relatively to some basis $B=\left\{  e_{1},..,e_{n}%
\right\}  $ of $\mathbb{C}^{n}$, the matrix $A_{1},$ ..., $A_{p} $ have the
same $(n-1)$ first columns. Denote by $E$ the subspace of $\mathbb{C}^{n}$
generated by $\left\{  e_{1},..,e_{n-1}\right\}  $. Let $W$ be a nontrivial
subspace of $\mathbb{C}^{n}$ stable under the action of $A_{i}$. Suppose that
$W\subset E$ and $\dim_{\mathbb{C}}W=r\in\left\{  1,..,n-1\right\}  $. Let
$\left\{  w_{1},..,w_{r}\right\}  $ a basis of $W$, we complete so that
$B^{\prime}=\left\{  w_{1},..,w_{n}\right\}  $ is a basis of $\mathbb{C}^{n}$.
Since $A_{i}e_{j}=A_{k}e_{j},$, for $i,k\in\left\{  1,..,p\right\}  $ and
$j\in\left\{  1,..,n-1\right\}  $, we deduce that
\[
A_{i}w_{j}=A_{k}w_{j},~\text{for }~i,k\in\left\{  1,..,p\right\}
\ \text{and}\ j\in\left\{  1,..,r\right\}  ,
\]
This proves that, knowing the stability of $W$, and relatively to the basis
$B^{\prime}$ the matrix $A_{i}$ have the form below
\[
A_{i}=\left(
\begin{array}
[c]{cc}%
A_{(r,r)} & \ast_{(r,n-r)}\ \ \ \ \\
o & \ast_{(n-r,n-r)}%
\end{array}
\right)  ,
\]
where $A_{(r,r)}$ is an order $r$ matrix common to all $A_{i},$ $\ast
_{(r,n-r)}$ (resp. $\ast_{(n-r,n-r)})$ is an element of $M_{(r,n-r)}%
(\mathbb{C})$ (resp. $M_{(n-r,n-r)}(\mathbb{C})).$. Therefore, the polynomial
$\det(A_{(r,r)}-\lambda I_{r})$ divides all the characteristic polynomials of
all $A_{i}$. The complex roots of $\det(A_{(r,r)}-\lambda I_{r})$ are in
$\cap_{i=1}^{p}specA_{i}.$\newline Assume that $W\nsubseteq E,$ then there is
a basis $\left\{  f_{n-p+1},..,f_{n}\right\}  $ of $W$ and a free system
$\left\{  g_{1},..,g_{n-p}\right\}  $ of $E$ such that $\left\{
g_{1},..,g_{n-p},f_{n-p+1},..,f_{n}\right\}  $ is a basis of $\mathbb{C}^{n}$.
With respect to this basis the matrix $A_{i}$ have the form:%
\[
A_{i}=\left(
\begin{array}
[c]{c}%
A_{(n,n-p)}%
\end{array}%
\begin{array}
[c]{c}%
0\ \ \ \ \ \ \\
\ \ast_{(p,p)}%
\end{array}
\right)  ,
\]
where $A_{(n,n-p)}$ is a matrix with $n$ rows and $(n-p)$ columns common to
all $A_{i}$ and $\ast_{(p,p)}$ is a matrix of order $p$. Thus, we deduce that
the $A_{i}$ have at least one common eigenvalue.
\end{proof}

\begin{corollary}
[Beukers Th\'{e}or\`{e}me 1.2.1]Let $H$ be a subgroup of $GL_{n}(\mathbb{C)}$
generated by two matrices $A$ and $B$ satisfying $AB^{-1}$ is a
pseudo-reflection. Then $H$ is linearly irreducible if and only if, the
spectrum of $A$ and $B$ are linealy disjoint.
\end{corollary}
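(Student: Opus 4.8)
The plan is to derive the equivalence directly from Lemma~\ref{ligne}, Theorem~\ref{stabilise} and Theorem~\ref{spec}, treating each implication by contraposition; we may clearly assume $n\geq 2$, the case $n=1$ being trivial. The first point to record is that both the irreducibility of $H=\langle A,B\rangle$ and the sets $\mathrm{spec}(A)$ and $\mathrm{spec}(B)$ are unaffected if $(A,B)$ is replaced by $(uAu^{-1},uBu^{-1})$ for some $u\in GL_n(\mathbb{C})$. Since $AB^{-1}$ is a pseudo-reflection (which already forces $A\neq B$), Lemma~\ref{ligne}, applied with $p=2$, $A_1=A$, $A_2=B$, shows that after such a conjugation we may assume that $A$ and $B$ have the same $(n-1)$ first rows, or the same $(n-1)$ first columns.

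Next I would prove that disjoint spectra imply irreducibility, in contrapositive form: if $H$ is reducible, then $A$ and $B$ leave invariant a common proper nonzero subspace $W$ of $\mathbb{C}^n$. Now $A$ and $B$ satisfy precisely the hypotheses of Theorem~\ref{spec} — they have the same $(n-1)$ first rows or columns, and they stabilize the nontrivial subspace $W$ — whence $\mathrm{spec}(A)\cap\mathrm{spec}(B)=\bigcap_{i=1}^{2}\mathrm{spec}(A_i)\neq\emptyset$. Undoing the conjugation does not change this intersection, so the original $A$ and $B$ share an eigenvalue. For the converse, again by contraposition, assume $\mathrm{spec}(A)\cap\mathrm{spec}(B)\neq\emptyset$. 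After the reduction above, $A$ and $B$ have the same $(n-1)$ first rows or columns and have one eigenvalue in common, so Theorem~\ref{stabilise} applies and yields a line or a hyperplane of $\mathbb{C}^n$ stable under both $A$ and $B$, hence under all of $H$; thus $H$ is reducible. Combining the two contrapositions gives the stated equivalence.

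I do not expect a real obstacle here, since the three structural results do the work. The only points that need a little care are the legitimacy of the reduction to the normal form (conjugation invariance of irreducibility and of the spectra, and the fact that Lemma~\ref{ligne} is genuinely available in the degenerate range $p=2$), and bookkeeping of which alternative — ``rows'' or ``columns'' — actually occurs, so that the hypotheses of Theorems~\ref{stabilise} and~\ref{spec} are invoked in the matching form (recalling that a family of matrices stabilizes a common hyperplane iff the transposed family has a common eigenvector, exactly the device used in the proof of Theorem~\ref{stabilise}).
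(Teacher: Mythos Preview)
Your proposal is correct and follows essentially the same approach as the paper: invoke Lemma~\ref{ligne} with $p=2$ to put $A$ and $B$ into the common-$(n-1)$-rows-or-columns normal form, then use Theorem~\ref{spec} for one implication and Theorem~\ref{stabilise} for the other. The paper's own proof is simply a one-line citation of these three results, so your write-up is just a more detailed unpacking of the same argument.
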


\begin{proof}
The lemma \ref{ligne} for $p=2,$ prooves that $A$ and $B$ have $(n-1)$ rows or
columns in commun. The theorems \ref{stabilise} and \ref{spec} furnished the result.
\end{proof}

\section{Levelt's Theorem}

\noindent The following result modifies, slightly, and generalizes in part,
the Levelt's theorem for case $p\geq2$ (see \cite{B} theorem 1.2.3).

\begin{theorem}
Let $n,p\in\mathbb{N}_{\geq2}$ and $\alpha_{i}=\left\{  \alpha_{i,1}%
,...,\alpha_{i,n}\right\}  \subset\mathbb{C}^{\ast},$ $1\leq i\leq p, $
satisfaying $\cap_{i=1}^{p}\alpha_{i}=\emptyset.$ Then, ther exist $A_{1},$
$A_{2},$ ..., $A_{p}$ in $GL_{n}(\mathbb{C)}$ having the same $(n-1)$ first
columns (unique up to a same isomorphism) such that for every $i,$
$specA_{i}=\alpha_{i}.\label{lev}$
\end{theorem}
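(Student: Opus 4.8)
The plan is to establish first the existence of such a tuple, which is elementary, and then the uniqueness up to a common conjugation, which is the real content. For the existence, put $f_i(\lambda)=\prod_{k=1}^{n}(\lambda-\alpha_{i,k})$ and let $A_i^{0}$ be the companion matrix of $f_i$ in its usual normal form, so that the first $n-1$ columns of $A_i^{0}$ are the standard basis vectors $e_2,\dots,e_n$ (independently of $i$) while the last column records the coefficients of $f_i$. Since $\alpha_i\subset\mathbb{C}^{\ast}$ one has $f_i(0)\neq0$, hence $A_i^{0}\in GL_n(\mathbb{C})$, and the characteristic polynomial of $A_i^{0}$ is $f_i$, so $\operatorname{spec}A_i^{0}=\alpha_i$. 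Thus $(A_1^{0},\dots,A_p^{0})$ already settles the existence part.

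Next I would observe that every admissible tuple $(A_1,\dots,A_p)$, by which I mean any tuple in $GL_n(\mathbb{C})$ with the same first $n-1$ columns and $\operatorname{spec}A_i=\alpha_i$, is irreducible: if it stabilised a common subspace other than $\{0\}$ and $\mathbb{C}^n$, then Theorem~\ref{spec} would give $\bigcap_i\operatorname{spec}A_i\neq\emptyset$, contradicting $\bigcap_i\alpha_i=\emptyset$. In particular $(A_i^{0})$ is irreducible.

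The core step is to show that an arbitrary admissible tuple $(A_1,\dots,A_p)$ is simultaneously conjugate to $(A_1^{0},\dots,A_p^{0})$; uniqueness follows at once. Set $F=\{x\in\mathbb{C}^n:\ A_ix=A_1x\ \text{for all }i\}=\bigcap_{i\geq2}\ker(A_i-A_1)$. Because the $A_i$ coincide on an $(n-1)$-dimensional subspace, $\dim F\geq n-1$, and $\dim F=n$ would force $A_1=\cdots=A_p$ and hence $\bigcap_i\alpha_i=\alpha_1\neq\emptyset$; so $\dim F=n-1$. Since $A_1$ is invertible each $A_1^{-j}(F)$ is again a hyperplane, so the subspace $F^{(n-1)}:=\bigcap_{j=0}^{n-2}A_1^{-j}(F)$, i.e. the set of $x$ with $x,A_1x,\dots,A_1^{\,n-2}x\in F$, has codimension at most $n-1$ and therefore contains a nonzero vector $v$. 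I claim that $v,A_1v,\dots,A_1^{\,n-2}v$ are linearly independent and that $A_1^{\,n-1}v\notin F$: otherwise the $A_1$-cyclic span $Z$ of $v$ would be an $A_1$-invariant subspace with $0\neq Z\subsetneq\mathbb{C}^n$ and $Z\subseteq F$, and since $A_i=A_1$ on $F\supseteq Z$ this $Z$ would be invariant under every $A_i$, contradicting irreducibility. Hence $(v,A_1v,\dots,A_1^{\,n-1}v)$ is a basis of $\mathbb{C}^n$; for $1\leq j\leq n-1$ one has $A_1^{\,j-1}v\in F$, so $A_iA_1^{\,j-1}v=A_1^{\,j}v$ for every $i$, which means that in this basis every $A_i$ is in companion form with first $n-1$ columns $e_2,\dots,e_n$. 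Such a matrix is determined by its characteristic polynomial, which equals $\prod_k(\lambda-\alpha_{i,k})$; so the matrix of $A_i$ in this basis is exactly $A_i^{0}$ for each $i$, which is the asserted conjugacy.

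The delicate point is the last claim of the core step: the codimension count by itself only produces a $v$ whose first $n-2$ iterates lie in $F$, and it is irreducibility, fed in through Theorem~\ref{spec} and the hypothesis $\bigcap_i\alpha_i=\emptyset$, that rules out the degenerate configurations and forces the full companion normal form. One should also read $\alpha_i$ as a genuine $n$-element set, so that each $A_i$ is regular semisimple and its conjugacy class, equivalently its characteristic polynomial, is pinned down by $\alpha_i$ alone; otherwise the uniqueness assertion would need to be qualified.
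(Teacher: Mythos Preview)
Your proof is correct and follows the same line as the paper's: existence via companion matrices, then uniqueness by intersecting $A_1$-iterates of the common hyperplane to produce a cyclic vector, with Theorem~\ref{spec} supplying the irreducibility that excludes the degenerate configurations. The only cosmetic differences are that your $\bigcap_{j=0}^{n-2} A_1^{-j}(F)$ is the paper's $\bigcap_{j=0}^{n-2} A_1^{\,j} W$ shifted by $A_1^{-(n-2)}$, and that your closing caveat about regular semisimplicity is stronger than needed---reading each $\alpha_i$ as a multiset already pins down the characteristic polynomial and hence the companion matrix, so no distinct-eigenvalue hypothesis is required.
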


\begin{proof}
Existence :\newline For every $(i,j)\in\left\{  1,..,p\right\}  \times\left\{
1,..,n\right\}  ,$ we define $A_{i,j}$ by
\[%
%TCIMACRO{\tprod \nolimits_{j=1}^{n}}%
%BeginExpansion
{\textstyle\prod\nolimits_{j=1}^{n}}
%EndExpansion
(X-\alpha_{i,j})=X^{n}+%
%TCIMACRO{\tsum \nolimits_{k=0}^{n-1}}%
%BeginExpansion
{\textstyle\sum\nolimits_{k=0}^{n-1}}
%EndExpansion
A_{i,n-k}X^{k},\
\]
By hypothesis, the $\alpha_{i,j}$ are non zero, than the matrices defined by
\[
A_{i}=\left(
\begin{array}
[c]{ccc}%
0\ 0 & 0 & -A_{i,n}\\
1\ 0 &  & .\\
0\ 0 & 1 & -A_{i,1}%
\end{array}
\right)  ,
\]
are in $GL(n,\mathbb{C)}$ The characteristic polynomial of $A_{i}$ is
\begin{align*}
\det(XI_{n}-A_{i}) &  =X^{n}+%
%TCIMACRO{\tsum \nolimits_{k=0}^{n-1}}%
%BeginExpansion
{\textstyle\sum\nolimits_{k=0}^{n-1}}
%EndExpansion
A_{i,n-k}X^{k}\\
&  =%
%TCIMACRO{\tprod \nolimits_{j=1}^{n}}%
%BeginExpansion
{\textstyle\prod\nolimits_{j=1}^{n}}
%EndExpansion
(X-\alpha_{i,j}).
\end{align*}
which proves the existence.\newline Uniqueness :\newline Let $A_{1},$ $A_{2},$
..., $A_{p}\in GL(n,\mathbb{C)}$ having the same $(n-1) $ first columns. Let
$\left\{  e_{1},..,e_{n}\right\}  $ be a basis of $\mathbb{C}^{n}$, relatively
to it we have, for all $i,j\in\left\{  1,..,p\right\}  ,$ $k\in\left\{
1,..,n-1\right\}  $ :%
\[
A_{i}e_{k}=A_{j}e_{k}.
\]
We denote by $W$ the vectoriel suspace of $\mathbb{C}^{n}$ generated by
$\left\{  e_{1},..,e_{n-1}\right\}  .$ We have :%
\[
\dim_{\mathbb{C}}(W\cap A_{1}W\cap...\cap A_{1}^{n-2}W)\geq1.
\]
We suppose that the dimensional of $W\cap A_{1}W\cap...\cap A_{1}^{n-2}W$ is
greater than or equal to $2$. Therefore, we have
\[
\dim_{\mathbb{C}}(W\cap A_{1}W\cap...\cap A_{1}^{n-1}W)\geq1.
\]
Hence, the subspace $W\cap A_{1}W\cap...\cap A_{1}^{n-1}W$ of $\mathbb{C}^{n}$
is nontrivial and stable under the action of all $A_{i}$. Theorem \ref{spec}
shows that $\cap_{i=1}^{p}specA_{i}\neq\emptyset$ which is absurd. So%
\[
\dim_{\mathbb{C}}(W\cap A_{1}W\cap...\cap A_{1}^{n-2}W)=1,
\]
Then, there is a vector $v$ in $W$ such that the system $\left\{
v,A_{1}v,..,A_{1}^{n-2}v\right\}  $ is a basis of $W$, we complete with a
vector to a basis of the total space $\mathbb{C}^{n}$. With respect to the
latter matrices $A_{i}$ have the form
\[
A_{i}=\left(
\begin{array}
[c]{ccc}%
0\ 0 & 0 & -A_{i,n}\\
1\ 0 &  & .\\
0\ 0 & 1 & -A_{i,1}%
\end{array}
\right)  ,
\]
where $A_{i,j}$ are determined by the spectrum $\left\{  \alpha_{i,1}%
,...,\alpha_{i,n}\right\}  $ of $A_{i}$ as follows:%
\[%
%TCIMACRO{\tprod \nolimits_{j=1}^{n}}%
%BeginExpansion
{\textstyle\prod\nolimits_{j=1}^{n}}
%EndExpansion
(X-\alpha_{i,j})=X^{n}+%
%TCIMACRO{\tsum \nolimits_{k=0}^{n-1}}%
%BeginExpansion
{\textstyle\sum\nolimits_{k=0}^{n-1}}
%EndExpansion
A_{i,n-k}X^{k},
\]
which completed the proof.
\end{proof}

\end{document}